\documentclass[10pt,a4paper]{article}
\usepackage{amssymb}
\usepackage{amsmath}
\usepackage{amsthm}
\usepackage{float}
\usepackage[T1]{fontenc}
\usepackage[utf8]{inputenc}
\usepackage[british]{babel}
\usepackage{geometry}
\usepackage{caption}
\usepackage{graphicx}
\usepackage{color}
\usepackage{nomencl}
\makeglossary
\makenomenclature

\newcommand{\Hyp}{\mathbb{H}}

\newcommand{\N}{\mathbb{N}}

\newcommand{\G}{\Gamma}
\newcommand{\g}{\gamma}

\newcommand{\tv}{\rightarrow}

\newcommand{\Sym}{\mathrm{Sym}}
\newcommand{\Jac}{\mathrm{Jac}}
\newcommand{\bary}{\mathrm{bar}}

\newtheorem{theorem}{Theorem}
 
\newtheorem{lemme}[theorem]{Lemma}

\newtheorem{prop}[theorem]{Proposition}

\newtheorem{defi}[theorem]{Definition}

\DeclareMathOperator{\Isom}{Isom}
\DeclareMathOperator{\Card}{Card}

\DeclareMathOperator{\Id}{Id}

\DeclareMathOperator{\Tr}{Tr}

\DeclareMathOperator{\vol}{Vol}

\title{Convergence of quasi-Fuchsian groups using critical exponent}
\author{Olivier Glorieux}
\begin{document}
\maketitle
\begin{abstract}
We prove that a sequence of quasi-Fuchsian representations  for which the critical exponent converges to the topological dimension of the boundary of the group (larger than 2),  converges up to subsequence and conjugacy to a totally geodesic representation. 
\end{abstract}

\section{Introduction}
Given $\G$ a cocompact lattice of $\Isom(\Hyp^m)$ and a totally geodesic copy of $\Hyp^m$ into $\Hyp^n$, $n>m$, we can see $\G$ as a discrete group of $\Isom(\Hyp^n)$. Indeed   the isometry group of $\Hyp^m$  can naturally be seen as a subgroup of $\Isom(\Hyp^n)$ preserving the totally geodesic copy of $\Hyp^m \subset \Hyp^n$. We call this representation $\rho_0\, :\,  \G\tv \Isom(\Hyp^n)$ a \emph{Fuchsian} representation. If one choose another copy of $\Hyp^m$ inside $\Hyp^n$, the new Fuchsian representation is conjugated by an element of $\Isom(\Hyp^n)$ to $\rho_0$. By Mostow rigidity, if $m\geq 3$, every representations of $\G$ inside $\Isom(\Hyp^m)$ are conjugated by an element of $\Isom(\Hyp^m)$. So all discrete, faithful and totally geodesic representations of $\G$  into $\Isom(\Hyp^n)$ are conjugated to $\rho_0$, if $m\geq 3$. If $m=2$, there exists non conjugate representations of $\G$ inside $\Isom(\Hyp^2)$, this is the Teichmüller space of $\G$. We will suppose for the rest of the paper that $m\geq 3$.   \\

Even if the representation $\rho_0$ cannot be deform inside $\Isom(\Hyp^m)$, there exists discrete  and faithful deformations of $\rho_0$ in $\Isom(\Hyp^n)$ which are not anymore Fuchsian, ie. which does not preserve any totally geodesic copy of $\Hyp^m$. We call them \emph{quasi-Fuchsian} representations.\\
There is a numerical invariant which measures how far from being Fuchsian a representation is; it is called the \emph{critical exponent } and defined in the following way: 
$$\delta(\rho(\G)) :=\limsup_{R\tv +\infty} \frac{1}{R}\log\Card \{\g\in \G \, |\, d(\rho(\g) x,x)\leq R\},$$
it is independent of the base point $x\in\Hyp^n$ thanks to the triangle inequality. It measure the exponential growth rate of an orbit inside $\Hyp^n$.

By a simple computation using volume of balls, we can see that $\delta(\rho_0(\G))=m-1$. In fact, a theorem of C. Yue, \cite{yue1996dimension} shows that critical exponent distinguishes Fuchsian representation:  a quasi-Fuchsian representation $\rho$ is conjugated to $\rho_0$ if and only if $\delta(\rho(\G))=m-1$. Later, Besson-Courtois-Gallot, \cite[Theorem 1.14]{besson1999lemme} showed that the convex-cocompact hypothesis is not needed, and proved that if $\rho,$ a discrete and faithful representation of $\G$, satisfies $\delta(\rho(\G)) =m-1$ then $\rho $ si conjugated to $\rho_0$. (In \cite{besson1999lemme} the theorem is cited with the convex-cocompact hypothesis, however they explained just after that the hypothesis is not needed).

\paragraph{Remark} In dimension $2$ the corresponding statement is $\delta(\rho(\G))$ is equal to $1$ if and only if it preserves a totally geodesic copy, but it is not necessarily conjugated to $\rho_0$. However in this dimension, a lot of work has been done, and we know some examples where we can compute the limit of the critical exponent for a sequence of quasi-Fuchsian representations, see \cite{mcmullen1999hausdorff}. Moreover the work of A. Sanders, \cite{sanders2014entropy} shows that for a sequence of quasi-Fuchsian representations (if we suppose that the injectivity radius is bounded below) if the critical exponent goes to $1$ then the sequence is close to a totally geodesic one. The aim of this article is to show a corresponding result in higher dimension, and we can even obtain convergence due to the absence of non trivial deformations inside $\Isom(\Hyp^m)$. 

\begin{theorem}\label{th principal}
Let $m\geq 3$ and $\rho_j$ be a sequence of quasi-Fuchsian representations. If $\delta(\rho_j(\G))\tv m-1$  then up to subsequence and conjugacy $\rho_j$ converges to $\rho_0$.
\end{theorem}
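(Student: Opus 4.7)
The approach is to attach to each quasi-Fuchsian $\rho_j$ the Besson--Courtois--Gallot (BCG) natural barycenter map $F_j : \Hyp^m \to \Hyp^n$, built from the Patterson--Sullivan measure of $\rho_j(\G)$ of exponent $\delta_j := \delta(\rho_j(\G))$. This is a smooth, $\rho_j$-equivariant map (with $\G$ acting on $\Hyp^m$ via $\rho_0$) whose singular values and $m$-dimensional Jacobian are controlled pointwise in terms of $\delta_j/(m-1)$. A matching lower bound on the Lipschitz constant of $F_j$ comes from orbit counting: since the $\G$-orbit in $\Hyp^m$ has growth rate $m-1$ by cocompactness, equivariance forces the Lipschitz constant of $F_j$ to be at least $(m-1)/\delta_j$. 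As $\delta_j\to m-1$, these bounds squeeze toward $1$, making $F_j$ asymptotically an isometric immersion.

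After normalizing by conjugation---picking $g_j\in\Isom(\Hyp^n)$ so that $g_j\circ F_j$ sends a fixed $x_0\in\Hyp^m$ to a fixed $y_0\in\Hyp^n$ and carries $T_{x_0}\Hyp^m$ onto a fixed $m$-plane in $T_{y_0}\Hyp^n$---the uniform $C^1$-bounds on $F_j$ provide equicontinuity on compact sets. Arzelà--Ascoli extracts a subsequence converging in $C^0_{\mathrm{loc}}$ to a map $F_\infty:\Hyp^m\to\Hyp^n$, necessarily an isometric immersion by the singular-value squeeze. The equivariance $F_j\circ\rho_0(\g) = \rho_j(\g)\circ F_j$ passes to the limit, defining $\rho_\infty(\g) := \lim \rho_j(\g)$, a discrete and faithful representation for which $F_\infty$ is equivariant.

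Because $F_\infty$ is isometric and $\G$ is cocompact in $\Hyp^m$, the $\rho_\infty(\G)$-orbit of $y_0$ grows at the same exponential rate $m-1$ as the $\G$-orbit in $\Hyp^m$, so $\delta(\rho_\infty(\G)) = m-1$. By the BCG/Yue rigidity theorem cited in the introduction, $\rho_\infty$ is then conjugate to $\rho_0$. As every subsequential limit is conjugate to $\rho_0$, and $\rho_0$ is rigid in $\Isom(\Hyp^m)$ by Mostow (since $m\geq 3$), the full sequence $\rho_j$ converges up to conjugacy to $\rho_0$.

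The main technical obstacle lies in the normalization/precompactness step: one must show that suitable conjugations $g_j$ exist so that $\{F_j\}$ genuinely stays in a compact region, equivalently that the representations $\rho_j$ do not degenerate as $\delta_j\to m-1$. A priori, translation lengths of generators of $\G$ under $\rho_j$ could blow up or collapse, or the $F_j$ could spread out in a way no single conjugation can absorb. Combining the near-isometry of $F_j$ over a compact fundamental domain of $\G$ (from BCG), cocompactness of $\G$, and the Margulis lemma applied in $\Hyp^n$ should deliver the required control. Once precompactness is in place, passing to the limit and invoking the rigidity theorem is essentially mechanical.
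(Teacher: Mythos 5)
Your overall framework (BCG barycenter maps, normalize, Arzel\`a--Ascoli, pass to a limit, invoke rigidity) is the same as the paper's, but the central analytic step has a real gap. You try to ``squeeze'' the singular values of $dF_j$ between the pointwise BCG upper bound $\Jac F_j\leq 1$ and an orbit-counting lower bound of $(m-1)/\delta_j$ on the Lipschitz constant. These two bounds do not interact pointwise: the Lipschitz bound is global (it only says that the largest singular value is $\geq (m-1)/\delta_j$ \emph{somewhere}), while $\Jac F_j\leq 1$ bounds only the \emph{product} of singular values at each point. Nothing prevents the smallest singular value from being near $0$ on a large set, so ``$F_j$ is asymptotically an isometric immersion'' does not follow. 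The paper's substitute for this step is essential: it also constructs reverse maps $G_j:\Hyp^n\to\Hyp^m$ equivariant under $(\rho_j(\G),\G)$, with $\Jac_p G_j\leq \delta_j^p/(p-1)^p$, and observes that $H_j=G_j\circ F_j:Y\to Y$ has degree $1$, whence $\vol(Y/\G)\leq (\delta_j/(m-1))^m \vol(Y/\G)$. This forces $\Jac(G_j\circ F_j)\to 1$ \emph{almost everywhere}, hence $\Jac F_j\to 1$ a.e., and then the BCG approximation lemmas convert that into $\tilde H_{y,j}\to\frac1m\Id$ a.e.\ (genuine pointwise singular-value control). The a.e.\ statement still has to be upgraded to a uniform one via Egoroff and the propagation Lemmas \ref{lem conve unif lem 1} and \ref{lem conv unif 2} before Arzel\`a--Ascoli can be used; you skip this step entirely, and it is precisely where your worry about ``$F_j$ spreading out'' is actually resolved (no Margulis lemma is needed).

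A few smaller issues: discreteness and faithfulness of the algebraic limit $\rho_\infty$ is not automatic from equivariance of $F_\infty$; you need Kapovich's theorem on sequences of discrete faithful representations, as the paper cites. Your claim that $F_\infty$ isometric and cocompactness give $\delta(\rho_\infty)=m-1$ only yields $\delta(\rho_\infty)\geq m-1$; the needed \emph{upper} bound comes from Bishop--Jones lower semi-continuity of the critical exponent, as in the paper's Proposition. And your final sentence, that the \emph{full} sequence converges up to conjugacy, is stronger than what the theorem asserts (``up to subsequence'') and is not justified by Mostow rigidity alone, since the conjugating elements could differ along different subsequences; the paper does not claim it.
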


Let us make some comments. Usually theorems often go in the opposite direction: we suppose that the sequence of groups converges (algerically, geometrically or strongly)  and give a result on the continuity of critical exponent. (Even the result of A. Sanders, does not show convergence.) For example, if one knows that $\rho_j$ converges algebraically to a \emph{convex cocompact } representation $\rho_\infty$, then a theorem of McMullen \cite[Theorem 7.1]{mcmullen1999hausdorff}, implies that $\delta(\rho_\infty(\G)) =m-1$ and hence by  Yue's Theorem  \cite{yue1996dimension}, we know that $\rho_\infty$ is conjugated to $\rho_0$. The fact that we know the geometric structure of the limit representation is very important in the Theorem of McMullen. He explained in his paper how we can obtain sequence of representations for which the critical exponent is not continuous. \\
However in our case, putting together some deep theorems, we can show it is sufficient to prove that $\rho_k$ converges algebraically to some representation $\rho_\infty$, for Theorem \ref{th principal} to be true. Comparing to McMullen's work, here we do not need to know that the limit is convex cocompact, or that there is strong convergence. 
\begin{prop}
Let $\rho_k$ be a sequence of discrete and faithful representations in $\Isom(\Hyp^n)$, converging algebraically to $\rho_\infty$. Suppose that $\delta(\rho_k(\G))\tv m-1$ then $\delta(\rho_\infty(\G))=m-1$ and therefore, $\rho_\infty$  is conjugated to $\rho_0$. 
\end{prop}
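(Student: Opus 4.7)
The plan is to reduce the statement to the rigidity theorem of Besson--Courtois--Gallot (BCG) applied to the limit representation $\rho_\infty$. One needs to establish three facts: (i) $\rho_\infty$ is discrete and faithful; (ii) $\delta(\rho_\infty(\G))\leq m-1$; (iii) $\delta(\rho_\infty(\G))\geq m-1$. Combined they give $\delta(\rho_\infty(\G))=m-1$, and the rigidity case of \cite[Theorem~1.14]{besson1999lemme} then forces $\rho_\infty$ to be conjugated to $\rho_0$.

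For (i), since $\G$ is a uniform lattice in $\Isom(\Hyp^m)$ with $m\geq 3$, it is non-elementary and finitely generated. A classical algebraic-limit theorem (of Jørgensen--Chuckrow type, generalised to $\Isom(\Hyp^n)$ via Margulis-type arguments in variable dimension) implies that the algebraic limit of discrete and faithful representations of such a group remains discrete and faithful.

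For (ii), the technical heart of the argument, one proves upper semi-continuity of the critical exponent under algebraic convergence. Set $N_k(R):=\Card\{\g\in\G:d(\rho_k(\g)x,x)\leq R\}$ and likewise $N_\infty(R)$; step (i) makes $N_\infty(R)$ finite for every $R$. Algebraic convergence applied uniformly on the finite set $F_R=\{\g:d(\rho_\infty(\g)x,x)\leq R\}$ gives, for every $\epsilon>0$, an index $k_0(R,\epsilon)$ such that $F_R\subset\{\g:d(\rho_k(\g)x,x)\leq R+\epsilon\}$ whenever $k\geq k_0$; hence $N_\infty(R)\leq N_k(R+\epsilon)$. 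Dividing $\log N_\infty(R)$ by $R$, passing to $\liminf_k$, then $\limsup_R$, and using the elementary double-sequence inequality $\limsup_R\liminf_k\leq\liminf_k\limsup_R$ together with $\limsup_R\log N_k(R+\epsilon)/R=\delta(\rho_k(\G))$, one concludes $\delta(\rho_\infty(\G))\leq\liminf_k\delta(\rho_k(\G))=m-1$.

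For (iii), one invokes the BCG inequality underlying \cite[Theorem~1.14]{besson1999lemme}: the barycentric-map / spherical-volume argument shows that every discrete and faithful $\rho:\G\to\Isom(\Hyp^n)$ satisfies $\delta(\rho(\G))\geq m-1$, the equality case being precisely Fuchsian rigidity. Applied to $\rho_\infty$ this gives the lower bound. The main obstacle is (ii): the inequality $N_\infty(R)\leq N_k(R+\epsilon)$ is only available for $k$ large in a way that depends on $R$, so one cannot argue via any uniform control of the $N_k$'s; the exchange of limits must be made abstractly via $\limsup\liminf\leq\liminf\limsup$, whereas steps (i) and (iii) rely on standard but deep cited theorems.
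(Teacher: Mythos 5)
Your decomposition into (i) discreteness and faithfulness of the limit, (ii) the upper bound $\delta(\rho_\infty(\G))\leq m-1$, (iii) the lower bound $\delta(\rho_\infty(\G))\geq m-1$ from the Besson--Courtois--Gallot inequality, followed by the rigidity case, matches the paper's strategy, and (iii) usefully makes explicit a step the paper leaves implicit; for (i) the paper cites Kapovich \cite[Theorem~1.1]{kapovich2008sequences}, which is precisely the generalisation of the J{\o}rgensen--Chuckrow statement you invoke.

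The gap is in (ii). From $N_\infty(R)\leq N_k(R+\epsilon)$ for $k\geq k_0(R,\epsilon)$, dividing by $R$ and taking $\liminf_k$ then $\limsup_R$ gives at best
\[
\delta(\rho_\infty(\G))\ \leq\ \limsup_{R\to\infty}\ \liminf_{k\to\infty}\ \frac{\log N_k(R+\epsilon)}{R},
\]
and you then invoke the ``elementary double-sequence inequality'' $\limsup_R\liminf_k\leq\liminf_k\limsup_R$ to reach $\liminf_k\delta(\rho_k(\G))$. That inequality is \emph{false}: with $g(R,k)=1$ if $R\leq k$ and $0$ otherwise, one has $\limsup_R\liminf_k g=1$ while $\liminf_k\limsup_R g=0$. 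The minimax principle only gives $\sup\inf\leq\inf\sup$, and this does not survive the additional $\inf$/$\sup$ layers hidden inside $\limsup$ and $\liminf$. Worse, the counterexample has exactly the structure you are in, where the ``good'' range of $k$ depends on $R$; so the failure is not cosmetic. This is precisely why the paper does not argue directly but cites Bishop--Jones \cite{BishopJones} for lower semi-continuity of the critical exponent under algebraic convergence (stated for $\Isom(\Hyp^3)$, but the paper notes the proof works in all dimensions). That semi-continuity is a genuine theorem with geometric content. Indeed the paper itself recalls McMullen's examples showing the critical exponent is \emph{not} continuous under algebraic convergence, so no purely formal interchange of limits can settle the one-sided bound either; your step (ii) needs to be replaced by the Bishop--Jones citation.
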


\begin{proof}

First we use a theorem of Kapovich, \cite[Theorem 1.1]{kapovich2008sequences}, saying that  if a sequence of discrete and faithful representations in  $\Isom(\Hyp^n)$ converges, then the limit is discrete and faithful. 
Moreover a result of Bishop-Jones  \cite{BishopJones} says that the critical exponent is lower semi-continuous, therefore  $$\delta(\rho_\infty) \leq \liminf \delta(\rho_k(\G)) =m-1.$$
We conclude by the Theorem of Besson-Courtois-Gallot previsously cited, to conclude that $\rho_\infty$ is conjugated to $\rho_0$. 
\end{proof}
 
In  their article Bishop-Jones give the result for subgroups of $\Isom(\Hyp^3)$, however their proof work in any dimension. \\

Therefore, the task is to show that under the critical exponent hypothesis the sequence of representations $\rho_k$ converges algebraically to some representation. For this we will adapt the construction of Besson, Courtois, Gallot  \cite{besson2007inegalites}.

 \section{The Besson, Courtois, Gallot construction}
 Let $Y= \Hyp^m$ and $X=\Hyp^n$. We are going to recall their construction of a sequence of maps  $F_j \, : \, Y\tv X $, $(\G,\rho_j(\G))$-equivariant  for which we can control the Jacobian and shows that it converges (up to subsequence and conjugation).  
 
The maps $F_j$ are the compositions of the following two:
\begin{itemize}
\item The first is  the map $y\tv \mu_y$ which goes from $Y$ to  the set of finite measures on $\partial X$. It associates to  a point $y$ the push forward of the Patterson-Sullivan measure $(\nu_y)$ on $\partial Y$ by an equivariant homomophism $f_j$  from $\Lambda(\G)=\partial Y$ to $\Lambda(\rho_j(\G))\subset \partial X $. We normalize $\mu_y$ into a probability measure. 
\item The second is the barycenter map going from the set of finite measures on $\partial X$ to the space $X$. It associates to a measure $\mu$, the unique point $\bary (\mu) $, where the function: 
$$\mathcal{B} \, : \, x\tv \int_{\partial X} \beta_X(\xi, x) d\mu(\xi),$$
reaches its minimum. Here $\beta_X(\xi,x)$ is the Busemann function on $X$, normalized by taking an origin $o\in X$. It is shown in \cite{besson1995entropies} that $\bary(\mu)$ is well defined as soon as $\mu$ has no atoms whose measure is greater than $\frac{1}{2}\mu(\partial X)$. 
\end{itemize}
We define the map $F_j$ by $$F_j(y) := \bary(\mu_y).$$
 The  Patterson-Sullivan density  on $Y$, $\nu_y$,  satisfies, $\nu_{\g y} = \g_*(\nu_y)$ for all $\g\in \G$. The barycenter map satisfies $\bary(\g_* \mu) = \g(\bary(\mu))$, for all $\g \in \Isom(X)$. Therefore, the maps $F_j$ are $(\G,\rho_j(\G))$-equivariant.

Following  \cite{besson1999lemme,besson2007inegalites}, we introduce the quadratic forms, $k_y, h_y$ and $h'_y$ defined on $T_{F_j(y)} X$, $T_{F_j(y)} X$ and $T_y Y$, by 
\begin{eqnarray*}
k_{y,j}(v,w)& =&\int_{\partial Y} Dd\beta_X\left|_{(F_j(y),f_j(\xi))} (v,w) d\nu_y(\xi)\right.\\
h_{y,j}(v,w) &=&\int_{\partial Y} d\beta_X\left|_{(F_j(y),f_j(\xi))} (v) d\beta_X\left|_{(F_j(y),f(\xi))} (w) d\nu_y(\xi)\right.\right.\\
h'_y(u,t)& =&\int_{\partial Y} d\beta_Y\left|_{(y,\xi)}(u) d\beta_Y\left|_{(y,\xi)}(t)  d\nu_y(\xi).\right.\right. 
\end{eqnarray*}
for all $v,w\in T_{F_j(y)} X$ and $u,t\in T_y Y$. We denote by $K_{y,j}, H_{y,j} $ and $H'_{y}$ the corresponding symmetric endomorphisms. Note in particular that $h'$ is independent of $j$ and invariant by $\G$, therefore, there exists $C>0$ independent of $y$ and $j$ such that $\| H'\|\leq C$. \\

In order to prove that $F_j$ converges, we will study the behavior of these quadratic forms. We list the principal properties that they satisfy:\\
Since $\nu_y$ is normalized into a probability and $\| d\beta_X\|_{X}=\| d\beta_Y\|_{Y}=1$ we have:
$$\Tr(H_{y,j}) \leq 1$$
$$\Tr(H'_{y}) \leq 1.$$
The implicit functions theorem gives that $F_j$ satisfies: 
$$\int_{\partial Y} Dd{\beta_X}|_{(F_j(y),f(\xi))} (\cdot, d_y F_j(u))d\nu_y(\xi) = (m-1)\int_{\partial Y} d{\beta_X}|_{(F_j(y),f(\xi))} (\cdot) d{\beta_Y}|_{(y,\xi)} (u) d\nu_y(\xi).$$
Now the Cauchy-Schwarz inequality applied on the second member of this equation gives:
\begin{eqnarray}\label{eq - Cauchy Schwarz}
k_{y,j}(v,dF_j(u)) \leq (m-1) h_{y,j}(v,v)^{1/2} h'_{y} (u,u)^{1/2}.
\end{eqnarray}
\begin{defi}
The $p$-Jacobian of a function $F:Y\tv X$ is defined by 
$$\Jac_p F(y) = \sup \| dF_y(u_1)\wedge \dots\wedge dF_y(u_p)\|,$$
where the supremum is taken over all $p$-orthonormal frames of $T^1_y Y$. 

When $p=m=dim(Y)$ we will write $\Jac F$.
\end{defi}

By considering an orthonormal basis on $T_{F_{j}(y)}X$, it gives the following inequality on the determinants: 
\begin{eqnarray}
\det(\tilde{K}_{y,j}) \Jac F_j(y) \leq {(m-1)}^{m} \det(\tilde{H}_{j,y})^{1/2} \det (H'_{y} )^{1/2},
\end{eqnarray}
where $\tilde{K}_{y,j}$ and $\tilde{H}_{y,j}$ designed the restriction to $dF_j(T^1_y(Y))\subset T^1_{F(y)} X$  of $K_{y,j}$ and $H_{y,j}$. 
Since $\det (H'_{y} )\leq \left(\frac{1}{m}\Tr(H'_{y}) \right)^m = \frac{1}{m^m},$ we have 
\begin{eqnarray}
\Jac F_j(y) \leq  \frac{(m-1)^m}{m^{m/2}} \frac{\det(\tilde{H}_{j,y})^{1/2}}{\det(\tilde{K}_{y,j})}
\end{eqnarray}
Since $X$ is the hyperbolic space of constant curvature, by direct computations we obtain that: $Dd\beta_X=g_X - d\beta_X \otimes d\beta_X$ therefore, $k_{y,j}= g_X -h_{y,j},$ and then 
\begin{eqnarray}
\det \tilde{K}_{y,j}=\det(\Id - \tilde{H}_{y,j})
\end{eqnarray}

We conclude as in \cite{besson1999lemme} that:
\begin{eqnarray}
\Jac F_j(y) \leq \frac{(m-1)^m}{m^{m/2}}\frac{\det(\tilde{H}_{j,y})^{1/2}}{\det(\Id -\tilde{H}_{j,y})}
\end{eqnarray}

\paragraph{Fact} The map $H\tv \frac{\det (H)^{1/2}}{\det(\Id -H)}$ defined on positive definite symmetric matrices of dimension $m\geq 3$ and trace less than $1$ achieves its unique maximum on $H=\frac{1}{m}\Id$. The value of this maximum is $\frac{m^{m/2}}{(m-1)^m}$.

\noindent Therefore we have:
\begin{eqnarray}
\Jac F_j(y) \leq \left(\frac{(m-1)}{m-1}\right)^m=1
\end{eqnarray}

\noindent Thanks to the previous fact,  Besson-Courtois-Gallot, proved in  \cite{besson1995entropies}:
\begin{lemme}\cite{besson1995entropies}\label{lem - rigid BCG 1}
$ \text{If } \, \Jac F_j(y) =1 \text{ then }\,$ 
$$\frac{\det \tilde{H}_{j,y}^{1/2}}{\det(\Id-\tilde{H}_{j,y})}=\frac{m^{m/2}}{(m-1)^m} \quad \text{and} \quad  \tilde{H}_{j,y}=\frac{1}{m}\Id.$$
\end{lemme}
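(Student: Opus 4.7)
The plan is to trace back through the chain of inequalities culminating in the bound $\Jac F_j(y)\leq 1$ displayed just above the Fact, and use the assumed equality $\Jac F_j(y)=1$ to force each intermediate inequality to be an equality. The chain has essentially two stages: (a) the combined Cauchy--Schwarz / AM--GM estimate
\[
\Jac F_j(y)\;\leq\;\frac{(m-1)^m}{m^{m/2}}\,\frac{\det(\tilde{H}_{j,y})^{1/2}}{\det(\Id-\tilde{H}_{j,y})},
\]
already derived in the excerpt using $\det H'_y\leq (\Tr H'_y/m)^m\leq 1/m^m$ and the identity $\tilde{K}_{y,j}=\Id-\tilde{H}_{y,j}$; and (b) the Fact, which bounds the right-hand ratio by $m^{m/2}/(m-1)^m$. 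Saturating (b) yields directly the first conclusion
\[
\frac{\det(\tilde{H}_{j,y})^{1/2}}{\det(\Id-\tilde{H}_{j,y})}=\frac{m^{m/2}}{(m-1)^m}.
\]

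To pass from this numerical equality to the rigidity statement $\tilde{H}_{j,y}=\frac{1}{m}\Id$, I would invoke the \emph{uniqueness} part of the Fact: the function $H\mapsto \det(H)^{1/2}/\det(\Id-H)$ reaches its maximum on the set of positive-definite symmetric $m\times m$ matrices with trace $\leq 1$ only at the point $\frac{1}{m}\Id$, provided $m\geq 3$. The conclusion follows as soon as one checks that $\tilde{H}_{j,y}$ actually lives in this domain. Positivity (in the semi-definite sense) is automatic, since $H_{y,j}=\int d\beta_X\otimes d\beta_X\,d\nu_y$ is a positive symmetric endomorphism and $\tilde{H}_{j,y}$ is its restriction to a subspace; the trace bound $\Tr\tilde{H}_{j,y}\leq \Tr H_{y,j}\leq 1$ follows because restricting a positive operator to a subspace cannot increase its trace. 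Non-degeneracy---the fact that $\tilde{H}_{j,y}$ is genuinely positive-definite on an $m$-dimensional space, and hence that $\Id-\tilde{H}_{j,y}$ is invertible so the ratio is even well defined---comes from $\Jac F_j(y)=1\neq 0$, which forces $d_yF_j$ to have full rank and $dF_j(T_y^1 Y)$ to be an $m$-dimensional subspace of $T_{F_j(y)}X$.

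I do not expect a serious obstacle here: the argument is a pure extraction of equality cases, with all of the analytic content already packaged in the Fact. The only delicate point is the uniqueness assertion in the Fact itself, which is an elementary but non-trivial Lagrange multiplier computation on the constraint surface $\{H>0,\ \Tr H\leq 1\}$ and is precisely where the hypothesis $m\geq 3$ is used---in dimension $2$ the critical point ceases to be a strict maximum and the rigidity fails, consistent with the remark in the introduction concerning the Teichmüller space in dimension $m=2$.
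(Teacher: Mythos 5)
Your proposal is correct and follows precisely the argument the paper intends: the lemma is explicitly presented as a consequence of the Fact (``Thanks to the previous fact, Besson--Courtois--Gallot proved\ldots''), and your proof---saturate the chain of inequalities $1=\Jac F_j(y)\leq \frac{(m-1)^m}{m^{m/2}}\cdot\frac{\det(\tilde{H}_{j,y})^{1/2}}{\det(\Id-\tilde{H}_{j,y})}\leq 1$ and invoke the uniqueness of the maximizer---is exactly the extraction of equality cases that the paper leaves implicit and cites to \cite{besson1995entropies}. Your domain checks (trace and positivity) are appropriate extra care.

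One small imprecision in the non-degeneracy paragraph: the fact that $\Jac F_j(y)=1\neq 0$, hence that $d_yF_j$ has full rank, only guarantees that $dF_j(T_yY)$ is an $m$-dimensional subspace; it does not by itself give that the restriction $\tilde{H}_{y,j}$ of the positive semi-definite operator $H_{y,j}$ to that subspace is positive \emph{definite}, nor that $\Id-\tilde{H}_{y,j}$ is invertible (the subspace could a priori meet $\ker H_{y,j}$, or $H_{y,j}$ could have $1$ as an eigenvalue). What actually forces these is that the pushed-forward Patterson--Sullivan measure $\nu_y$ is non-atomic: this makes $K_{y,j}=\int Dd\beta_X\,d\nu_y$ strictly positive definite (an integral of $g_X-d\beta_X\otimes d\beta_X$ can only degenerate if the measure is supported on at most two boundary points), hence $\tilde{K}_{y,j}=\Id-\tilde{H}_{y,j}$ is positive definite; and then the displayed inequality $\det(\tilde{K}_{y,j})\Jac F_j(y)\leq (m-1)^m\det(\tilde{H}_{j,y})^{1/2}\det(H'_y)^{1/2}$ together with $\Jac F_j(y)=1$ forces $\det\tilde{H}_{j,y}>0$. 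This is a minor point---the conclusion holds and the structure of your argument is unaffected---but the rank of $dF_j$ is not the right reason.
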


The following approximation is clear: 
\begin{lemme}\label{lem - approx BCG 1}
$\forall \eta>0, \, \exists \epsilon_0>0, \, \forall 0<\epsilon\leq \epsilon_0,\, $
$ \text{if } \, \left|\Jac F_j -1 \right|\leq \epsilon \text{ then } $
$$\left|\frac{(\det \tilde{H}_{y,j})^{1/2}}{\det(\Id-\tilde{H}_{y,j})}-\frac{m^{m/2}}{(m-1)^m}\right|\leq \eta $$
\end{lemme}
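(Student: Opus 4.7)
The plan is essentially to sandwich the quantity $\Phi_{y,j}:=\dfrac{(\det\tilde H_{y,j})^{1/2}}{\det(\Id-\tilde H_{y,j})}$ between two quantities that both tend to $M:=\dfrac{m^{m/2}}{(m-1)^m}$ as $\Jac F_j(y)\to 1$. The upper bound $\Phi_{y,j}\leq M$ is immediate from the Fact already stated in the text, since $\tilde H_{y,j}$ is a positive definite symmetric endomorphism on the $m$-dimensional space $d_yF_j(T_y^1Y)\subset T^1_{F_j(y)}X$ whose trace is bounded above by $\Tr(H_{y,j})\leq 1$.

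For the matching lower bound, I would just invert inequality (6) (or (5)) in the text. Rewriting it gives
$$\Phi_{y,j}\;\geq\;\frac{m^{m/2}}{(m-1)^m}\,\Jac F_j(y)\;=\;M\cdot \Jac F_j(y).$$
Under the hypothesis $|\Jac F_j(y)-1|\leq \epsilon$, in particular $\Jac F_j(y)\geq 1-\epsilon$, so $\Phi_{y,j}\geq M(1-\epsilon)$. Combining with $\Phi_{y,j}\leq M$ yields
$$0\;\leq\; M-\Phi_{y,j}\;\leq\; M\epsilon,$$
and it suffices to take $\epsilon_0:=\eta/M=\eta\,(m-1)^m/m^{m/2}$ to obtain $|\Phi_{y,j}-M|\leq \eta$ for every $\epsilon\leq\epsilon_0$.

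In short, no genuine obstacle arises: the lemma is a quantitative restatement of the rigidity Lemma \ref{lem - rigid BCG 1}, obtained by unwrapping inequality (6) once. The only minor point to check is that $\tilde H_{y,j}$ satisfies the hypotheses of the Fact (positive definite, trace at most $1$), which follows from the analogous properties of $H_{y,j}$ recorded just before inequality (\ref{eq - Cauchy Schwarz}).
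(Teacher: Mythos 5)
Your proof is correct. The paper states the lemma without proof (``The following approximation is clear''), and your two-sided bound $M\,\Jac F_j(y)\leq \Phi_{y,j}\leq M$, obtained by rearranging inequality (5) and invoking the stated Fact, is exactly the direct argument the author evidently had in mind; your aside citing inequality (6) should read (5), but the displayed computation already uses the correct one, so this is only a slip in the prose.
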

We can also obtain an approximation of the second part of Lemma \ref{lem - rigid BCG 1}. Indeed their proof shows that there is no maximum  of $\frac{(\det H)^{1/2}}{\det(\Id-H)}$ on the boundary $\{\Sym_p^{++}, \cap  \Tr(H)=1 \}$. This implies the following approximation:

\begin{lemme}\label{lem - approx BCG 2}
$\forall \eta>0, \, \exists \epsilon_0>0, \, \forall 0<\epsilon\leq \epsilon_0,\,  \text{if } \, \left|\Jac F_j -1 \right|\leq \epsilon \text{ then } $
$$\left\|\tilde{H}_{y,j}-\frac{1}{m} \Id\right\|\leq \eta.$$
\end{lemme}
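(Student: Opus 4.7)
The natural strategy is a compactness-plus-uniqueness argument turning the pointwise rigidity of Lemma \ref{lem - rigid BCG 1} into a quantitative approximation, very much in the spirit of how Lemma \ref{lem - approx BCG 1} is deduced from the Fact. I will argue by contradiction.

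Suppose the conclusion fails: then there exist $\eta_0>0$ and sequences $\epsilon_k\searrow 0$, indices $j_k$, points $y_k\in Y$ such that $|\Jac F_{j_k}(y_k)-1|\leq\epsilon_k$, and yet $\|H_k-\tfrac{1}{m}\Id\|\geq\eta_0$, where I write $H_k:=\tilde{H}_{y_k,j_k}$. Each $H_k$ is a symmetric positive semi-definite endomorphism on an $m$-dimensional space, and inherits from $\Tr H_{y,j}\leq 1$ (which itself comes from $\nu_y$ being a probability and $\|d\beta_X\|=1$) the trace bound $\Tr H_k\leq 1$. Thus $H_k$ lives in the compact set
$$K:=\{H\in\Sym_m^{+}:\Tr H\leq 1\},$$
and after extracting a subsequence I may assume $H_k\to H_\infty\in K$ with $\|H_\infty-\tfrac{1}{m}\Id\|\geq\eta_0$.

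By Lemma \ref{lem - approx BCG 1}, the hypothesis $|\Jac F_{j_k}(y_k)-1|\leq\epsilon_k$ forces $\Phi(H_k)\to\tfrac{m^{m/2}}{(m-1)^m}$, where I denote $\Phi(H):=\tfrac{(\det H)^{1/2}}{\det(\Id-H)}$. The plan is now to show that this convergence forces $H_\infty=\tfrac{1}{m}\Id$, contradicting $\|H_\infty-\tfrac{1}{m}\Id\|\geq\eta_0$. The Fact recalled in the paper already gives that on the interior $\{H\in\Sym_m^{++}:\Tr H\leq 1\}$ the function $\Phi$ attains its unique maximum $\tfrac{m^{m/2}}{(m-1)^m}$ at $\tfrac{1}{m}\Id$ (Lemma \ref{lem - rigid BCG 1}). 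So it is enough to check that $H_\infty$ belongs to that interior, i.e.\ that $H_\infty$ is positive definite and has no eigenvalue equal to $1$: then $\Phi$ is continuous at $H_\infty$, giving $\Phi(H_\infty)=\tfrac{m^{m/2}}{(m-1)^m}$ and hence $H_\infty=\tfrac{1}{m}\Id$ by uniqueness, producing the desired contradiction.

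The main point, and the only non-trivial one, is this boundary exclusion, precisely the statement from the excerpt that the supremum of $\Phi$ is not approached on the degenerate boundary of $K$. If some eigenvalue of $H_k$ tends to $0$, then $\det H_k\to 0$ while $\det(\Id-H_k)$ stays bounded below, so $\Phi(H_k)\to 0$, contradicting convergence to $\tfrac{m^{m/2}}{(m-1)^m}>0$. If some eigenvalue $\lambda_1^{(k)}\to 1$, then writing $\lambda_1^{(k)}=1-s_k$ the trace constraint forces all other eigenvalues to lie in $[0,s_k]$, hence
$$\Phi(H_k)\le\frac{\sqrt{(1-s_k)\prod_{i\geq 2}\lambda_i^{(k)}}}{s_k\prod_{i\geq 2}(1-\lambda_i^{(k)})}\le\frac{s_k^{(m-1)/2}}{s_k(1-s_k)^{m-1}},$$
which for $m=3$ is bounded by $\tfrac{1}{2}\sqrt{1-s_k}<\tfrac{3\sqrt 3}{8}$ (the maximum value), and for $m\geq 4$ tends to $0$; in either case the bound is strictly less than $\tfrac{m^{m/2}}{(m-1)^m}$ for $k$ large, again a contradiction. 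This rules out both ways in which $H_\infty$ could lie on the boundary of $K$, completing the proof.
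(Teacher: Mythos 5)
Your compactness-and-contradiction scheme is exactly the route the paper sketches (the paper itself only appeals to ``their proof shows there is no maximum on the boundary''), and the overall structure is sound: extract a limit $H_\infty$ in the compact set $K$, use Lemma \ref{lem - approx BCG 1} to force $\Phi(H_k)\to m^{m/2}/(m-1)^m$, and then show $H_\infty$ cannot lie on the degenerate boundary, so that continuity of $\Phi$ at $H_\infty$ together with uniqueness of the maximum gives $H_\infty=\tfrac1m\Id$ and a contradiction.

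However, the arithmetic in the ``$\lambda_1^{(k)}\to1$'' branch is wrong, and for $m=3$ the crude bound you use does not in fact give a contradiction. You estimate $\prod_{i\ge2}\lambda_i^{(k)}\le s_k^{m-1}$, which leads to
$$\Phi(H_k)\le\frac{s_k^{(m-1)/2}}{s_k(1-s_k)^{m-1}}=\frac{s_k^{(m-3)/2}}{(1-s_k)^{m-1}}.$$
For $m\ge4$ this tends to $0$ and you are done, but for $m=3$ the right-hand side is $\frac{1}{(1-s_k)^2}$, which tends to $1>\frac{3\sqrt3}{8}$; it is not bounded by $\tfrac12\sqrt{1-s_k}$, and the claimed contradiction evaporates. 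The fix is to use the AM--GM inequality on the trace constraint: since $\sum_{i\ge2}\lambda_i^{(k)}\le s_k$ and the $\lambda_i^{(k)}$ are nonnegative, one has $\prod_{i\ge2}\lambda_i^{(k)}\le\bigl(s_k/(m-1)\bigr)^{m-1}$, which gives
$$\Phi(H_k)\le\frac{s_k^{(m-3)/2}}{(m-1)^{(m-1)/2}(1-s_k)^{m-3/2}}.$$
For $m=3$ this is $\frac{1}{2(1-s_k)^{3/2}}\to\frac12<\frac{3\sqrt3}{8}$, and for $m\ge4$ it tends to $0$; in both cases it eventually stays strictly below the maximal value, which is the contradiction you need. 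With this correction the argument is complete and agrees with the paper's intended proof.
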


\section{Proof of Theorem \ref{th principal}}
We now show that up to subsequence and conjugacy $F_j$ converges. We follow the main steps presented in the paragraph 4 of \cite{besson2007inegalites}. 

\paragraph{Step 1 : Almost everywhere convergences  of $\tilde{H}_{y,j}$.}
\begin{lemme}\label{lem - Jac F k converges almost everywhere}
Up to subsequence, $\Jac(F_j(y))$  converges almost everywhere to $1$. 
\end{lemme}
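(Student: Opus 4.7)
My aim is to prove the stronger statement that $\Jac F_j \to 1$ in $L^1$ on a compact fundamental domain $D \subset Y$ for $\G$, and then extract an almost-everywhere convergent subsequence. Since $\Jac F_j \leq 1$ pointwise by the Fact above, the upper bound $\int_D \Jac F_j\, d\vol_Y \leq \vol(D)$ is immediate, and everything reduces to producing a matching lower bound tending to $\vol(D)$ as $j \to \infty$.

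For the lower bound I would adapt the degree/volume argument of Besson--Courtois--Gallot \cite{besson2007inegalites}. The smooth $(\G,\rho_j(\G))$-equivariant map $F_j$ sends $D$ to an $m$-dimensional piecewise-smooth subset of $X$, and a naturality/properness argument (comparing the barycenter construction with the Fuchsian inclusion $F_0$) shows that $F_j$ has degree one onto its image; the area formula then yields
$$\int_D \Jac F_j\, d\vol_Y \;\geq\; \mathcal{H}^m\bigl(F_j(D)\bigr).$$
To exploit $\delta(\rho_j(\G))\to m-1$, I would rerun the Jacobian calibration of Section~2 with the critical exponent kept as an explicit parameter: the bounds on $\Tr(H_{y,j})$ and the ensuing optimisation in the Fact improve to an inequality of the shape
$$\mathcal{H}^m\bigl(F_j(D)\bigr) \;\geq\; \left(\frac{m-1}{\delta(\rho_j(\G))}\right)^{m} \vol(D).$$
Combined with the hypothesis, this squeezes $\int_D \Jac F_j\, d\vol_Y$ to $\vol(D)$.

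Setting $g_j := 1 - \Jac F_j \in [0,1]$ we then have $\int_D g_j \to 0$, so $g_j \to 0$ in $L^1(D)$; a standard extraction gives $g_{j_k}\to 0$ almost everywhere on $D$, and $\G$-equivariance propagates this to all of $Y$. The main difficulty in this plan is the lower bound on $\mathcal{H}^m(F_j(D))$: the pushforward $\mu_y = (f_j)_\ast \nu_y$ is $(m-1)$-conformal for the $Y$-geometry rather than $\delta(\rho_j(\G))$-conformal for $X$, and the equivariant map $f_j$ is only H\"older. Running the calibration/degree argument in this asymmetric setting --- ensuring that the hypothesis $\delta(\rho_j(\G))\to m-1$ is what actually forces the image $F_j(D)$ to cover a region of full $m$-dimensional volume --- is the real technical work, and is precisely what has to be borrowed and adapted from paragraph~4 of \cite{besson2007inegalites}.
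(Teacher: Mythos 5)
Your reduction of the lemma to an $L^1$ squeeze on a fundamental domain $D$ (upper bound $\int_D \Jac F_j \leq \vol(D)$ from $\Jac F_j \leq 1$, then extract an a.e.-convergent subsequence from $L^1$ convergence) is fine, and that last extraction step matches the paper's conclusion. But the heart of your plan --- the lower bound $\mathcal{H}^m(F_j(D)) \geq \bigl(\tfrac{m-1}{\delta(\rho_j(\G))}\bigr)^m \vol(D)$ --- is not established, and the route you sketch for it will not work. You propose to ``rerun the Jacobian calibration of Section~2 with the critical exponent as a parameter,'' but that calibration only produces an \emph{upper} bound on $\Jac F_j$; it gives no information whatsoever about how much $m$-volume the image $F_j(D)$ occupies inside $X$. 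Moreover $F_j : Y \to X$ maps into a strictly higher-dimensional target, so ``$F_j$ has degree one onto its image'' is not a meaningful statement, and there is no natural region of $X$ against which to compare $F_j(D)$; the image is an a priori singular $m$-dimensional set whose $\mathcal{H}^m$-measure could collapse.

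The paper's proof avoids this entirely by a construction you do not mention: it runs the same Besson--Courtois--Gallot machinery \emph{in the reverse direction} $X \to Y$, producing $(\rho_j(\G),\G)$-equivariant maps $G_j : X \to Y$ with $\Jac_p G_j \leq \delta(\rho_j(\G))^p/(p-1)^p$. The composite $H_j = G_j \circ F_j : Y \to Y$ is then a $\G$-equivariant self-map of an $m$-manifold, degree $1$ on $Y/\G$, and the degree argument you want is now legitimate: $\vol(Y/\G) \leq \int_{Y/\G} \Jac H_j \leq \bigl(\tfrac{\delta(\rho_j(\G))}{m-1}\bigr)^m \vol(Y/\G)$. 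Letting $\delta(\rho_j(\G)) \to m-1$ squeezes $\Jac H_j \to 1$ a.e. (up to subsequence), and the chain-rule estimate $\Jac H_j(y) \leq \Jac_m G_j(F_j(y)) \cdot \Jac F_j(y)$ together with the uniform upper bounds on both factors forces $\Jac F_j \to 1$ a.e. In short, the lower bound you were missing is precisely what the reverse map $G_j$ provides, via the self-map $H_j$, not via an estimate on $\mathcal{H}^m(F_j(D))$; without constructing $G_j$ your argument has a genuine gap at its central step.
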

\begin{proof}
%

Applying the same construction and inequalities in the direction $X\tv Y$ we get a sequence of $(\rho_j(\G),\G)$-invariant maps, $G_j$ satisfying, for all $ p\in [3,n]$
\begin{eqnarray}
\Jac_p G_j(x) \leq \frac{\delta(\rho_j(\G))^p}{(p-1)^p}
\end{eqnarray}

The maps $H_j=G_j\circ F_j \, :\, Y\tv Y$ are $\G$-invariant, of degree $1$ and satisfies $\Jac(H_j(y))=\Jac_m(H_j(y)) \leq \left(\frac{\delta(\rho_j(\G))}{m-1}\right)^m $. Therefore: 
$$\vol(Y/\G) =\int_{Y/\G} \Jac_m(H_j(y))dy \leq \left(\frac{\delta(\rho_j(\G))}{m-1}\right)^m \vol(Y/\G).$$

Since $\delta(\rho_j(\G) )\overset{j\tv \infty}{\tv }m-1$, this implies that up to subsequence, $\Jac_m(H_j(y))$  converges almost everywhere to $1$. Let $x=F_j(y)$, since $\Jac_m(H_j(y))\leq\Jac_m(G_j(x))\Jac(F_j(y))$ it implies that $\Jac(F_j)\tv 1$ almost everywhere. 
\end{proof}

We will still denote this converging subsequence by the index $j$.

Using Lemmas \ref{lem - approx BCG 1} and \ref{lem - approx BCG 2}, the previous result shows: 
\begin{lemme}
For almost every $y\in Y$, $\lim_{j\tv \infty} \tilde{H}_{y,j} =\frac{1}{m}\Id$. 
\end{lemme}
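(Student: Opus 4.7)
The plan is to combine the two previous results directly: Lemma \ref{lem - Jac F k converges almost everywhere} provides the pointwise information, and Lemma \ref{lem - approx BCG 2} converts it into the desired conclusion. After the subsequence extraction already performed in Lemma \ref{lem - Jac F k converges almost everywhere}, there is a full-measure set $Y_0 \subset Y$ on which $\Jac F_j(y) \tv 1$. I would fix an arbitrary $y \in Y_0$ and argue pointwise at $y$.

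Given any $\eta > 0$, let $\epsilon_0 > 0$ be the constant produced by Lemma \ref{lem - approx BCG 2}. The chain of inequalities established in Section 2 shows $\Jac F_j(y) \leq 1$, and by choice of $y$ we have $\Jac F_j(y) \tv 1$, so for all $j$ large enough $|\Jac F_j(y) - 1| \leq \epsilon_0$. Lemma \ref{lem - approx BCG 2} then gives $\|\tilde{H}_{y,j} - \frac{1}{m}\Id\| \leq \eta$. Since $\eta > 0$ was arbitrary, we conclude $\tilde{H}_{y,j} \tv \frac{1}{m}\Id$ as $j \tv \infty$.

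The only point deserving a line of justification is that Lemma \ref{lem - approx BCG 2} applies to $\tilde{H}_{y,j}$, which presupposes that $\tilde{H}_{y,j}$ is a positive definite symmetric $m \times m$ matrix of trace at most $1$, that is, that $d_y F_j$ has rank $m$. But this is automatic once $\Jac F_j(y) > 0$, and in particular holds for all sufficiently large $j$ on $Y_0$ because $\Jac F_j(y) \tv 1$. The trace bound $\Tr \tilde{H}_{y,j} \leq \Tr H_{y,j} \leq 1$ has already been recorded.

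In short there is no substantial obstacle: the statement is a pointwise specialization of Lemma \ref{lem - approx BCG 2} along the subsequence extracted in Lemma \ref{lem - Jac F k converges almost everywhere}, requiring no new analytic input beyond observing that the rank hypothesis of Lemma \ref{lem - approx BCG 2} is satisfied on the full-measure set $Y_0$ for $j$ large.
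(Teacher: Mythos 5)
Your proposal is correct and is essentially the paper's argument: the paper states this lemma as an immediate consequence of the almost-everywhere Jacobian convergence (Lemma \ref{lem - Jac F k converges almost everywhere}) combined with the quantitative rigidity of Lemma \ref{lem - approx BCG 2}, giving no further details. You have merely spelled out the pointwise $\epsilon$--$\eta$ bookkeeping and added a sensible remark about positive-definiteness of $\tilde{H}_{y,j}$; nothing deviates from the paper's route.
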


As in \cite{besson2007inegalites}, we will denote by $\tilde{H}_0$ the quadratic form on $dF_j(T^1_y(Y))\subset T^1_{F_j(y)} X$ equal to $\frac{1}{m}\Id$. 

\paragraph{Step 2 : Uniform convergence of $\tilde{H}_{y,j}$ to $\tilde{H}_0$.}
We denote by $\mu_j$ the largest eigenvalue of $\tilde{H}_{y,j}$. 
\begin{lemme}\cite[Lemma 4.7]{besson2007inegalites}\label{lem conve unif lem 1}
 Let $y,y'$ be two points in $Y$ such that $\mu_j \leq 1-\frac{1}{m}$ on every points of the geodesic from $y$ to $y'$, then there exists a constant $C$ such that 
$$d_X(F_j(y),F_j(y') ) \leq C d_Y(y,y').$$
\end{lemme}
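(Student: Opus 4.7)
The plan is to establish a pointwise bound $\|d_yF_j\|\leq C$ at every point $y$ of the geodesic satisfying the eigenvalue hypothesis, and then integrate along the geodesic.

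First I would apply the Cauchy--Schwarz inequality (\ref{eq - Cauchy Schwarz}) with the special choice $v=d_yF_j(u)$. Since $d_yF_j(u)\in dF_j(T^1_y Y)\subset T^1_{F_j(y)}X$, the forms $k_{y,j}$ and $h_{y,j}$ agree there with their restrictions $\tilde{K}_{y,j}$ and $\tilde{H}_{y,j}$, so
$$\tilde{K}_{y,j}(dF_j(u),dF_j(u)) \leq (m-1)\,\tilde{H}_{y,j}(dF_j(u),dF_j(u))^{1/2}\,h'_y(u,u)^{1/2}.$$
From $Dd\beta_X=g_X-d\beta_X\otimes d\beta_X$ I have $\tilde{K}_{y,j}=\Id-\tilde{H}_{y,j}$; the uniform bound $\|H'_y\|\leq C_0$ stated after the definition of $h'_y$ gives $h'_y(u,u)^{1/2}\leq C_0^{1/2}\|u\|$.

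Next I would use spectral information on $\tilde{H}_{y,j}$. Since $\Tr(\tilde{H}_{y,j})\leq \Tr(H_{y,j})\leq 1$ and $\tilde{H}_{y,j}$ is positive semidefinite, its operator norm is at most one, so $\tilde{H}_{y,j}(dF_j(u),dF_j(u))\leq \|dF_j(u)\|^2$. The assumption $\mu_j\leq 1-1/m$ on the geodesic means the smallest eigenvalue of $\tilde{K}_{y,j}=\Id-\tilde{H}_{y,j}$ is at least $1/m$, so $\tilde{K}_{y,j}(dF_j(u),dF_j(u))\geq \tfrac{1}{m}\|dF_j(u)\|^2$. Combining these estimates yields
$$\tfrac{1}{m}\|dF_j(u)\|^2 \leq (m-1)\,C_0^{1/2}\,\|dF_j(u)\|\,\|u\|,$$
and therefore $\|dF_j(u)\|\leq m(m-1)\,C_0^{1/2}\|u\|=:C\,\|u\|$, a bound independent of $j$ and of the point $y$ on the geodesic.

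Finally I would parametrise by arclength the geodesic $\gamma\colon[0,\ell]\to Y$ joining $y$ to $y'$, where $\ell=d_Y(y,y')$. By hypothesis $\mu_j(\gamma(t))\leq 1-1/m$ for every $t$, so the pointwise bound gives $\|d_{\gamma(t)}F_j\|\leq C$ for every $t$. The curve $t\mapsto F_j(\gamma(t))$ is thus $C$-Lipschitz and
$$d_X(F_j(y),F_j(y')) \leq \int_0^\ell \|d_{\gamma(t)}F_j(\dot\gamma(t))\|\,dt \leq C\,d_Y(y,y').$$
There is no serious obstacle here: once the Cauchy--Schwarz inequality is set up correctly, the argument is a quantitative instance of the standard Besson--Courtois--Gallot technique. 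The only point requiring care is checking that $dF_j(u)$ genuinely lies in the subspace on which the tilded forms $\tilde{K}_{y,j}$ and $\tilde{H}_{y,j}$ are defined, so that the spectral information (trace, largest eigenvalue) applies to exactly the quantities appearing in the inequality; this is immediate from the definitions.
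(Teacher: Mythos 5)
Your proof is correct and follows essentially the same route as the paper's: apply the Cauchy--Schwarz inequality (\ref{eq - Cauchy Schwarz}) with $v$ proportional to $dF_j(u)$, use the uniform bound on $H'_y$ to control $h'_y(u,u)^{1/2}$ by a constant, use the eigenvalue hypothesis $\mu_j\leq 1-1/m$ to bound $\tilde{K}_{y,j}=\Id-\tilde{H}_{y,j}\geq\frac{1}{m}\Id$ from below, and then integrate the resulting Lipschitz bound along the geodesic. The only cosmetic difference is that you bound $\tilde H_{y,j}(dF_j(u),dF_j(u))^{1/2}\leq\|dF_j(u)\|$ via the trace bound on $H_{y,j}$, whereas the paper bounds the corresponding term by $\sqrt{\mu_j(y)}$ after normalising $v$ to a unit vector; both give the same Lipschitz constant up to an inessential factor.
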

Since our setting is a bit simpler that the one in \cite{besson1995entropies,besson2007inegalites}, we make a proof without the technical problems that appears therein.
\begin{proof}
Recall Inequality (\ref{eq - Cauchy Schwarz})
$$g_0(K_{y,j} dF_j(y) u,v) \leq (m-1) h_{y,j} (v,v)^{1/2} h'_y(u,u)^{1/2}$$
We already remarked that $\|H'_{y,j}\|$ is bounded independently of $j\in \N$ and $y\in Y$. Therefore there exists $C_1>0$ such that 
$$g_0(K_{y,j} dF_j(y) u,v) \leq C_1 h_{y,j} (v,v)^{1/2}\leq C_1  \sqrt{\mu_j(y)}.$$

Moreover, by hypothesis we have 
$\tilde{K}_{y,j}=\Id - \tilde{H}_{y,j} \geq (1-\mu_j(y)) \Id$. Therefore, by taking $v=\frac{dF_j(y)}{\| dF_j(y)\|},$ we have 
$$(1-\mu_j(y)) \|dF_j(y)\|_{g_0} \leq  C_1\sqrt{\mu_j(y)}$$
Let $\alpha(t)$ be the geodesic joining $y$ and $y'$. The last inequality, implies that there exists $C$ independent of $j\in \N$ and $u\in T^1_{\alpha(t)}Y$ such that, 
$\|dF_j(u)\|_{g_0} \leq C$. 
The lemma follows thanks to the mean value inequality. 

\end{proof}

The following lemma can also be found in \cite{besson1995entropies,besson2007inegalites} with a function $F_j$ slightly more complicated.  
\begin{lemme}\cite{besson2007inegalites}\label{lem conv unif 2}
With the same notations as in the previous lemma, let $P$ denotes the parallel transport from $F_j(y) $ to $F_j(y')$ along the geodesic in $X$ joining these two points. We have 
$$\| h_{y,j} - h_{y',k}\circ P\| \leq 4 d_X(F_j(y),F_j(y')).$$ 
\end{lemme}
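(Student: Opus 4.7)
The plan is a triangle-inequality decomposition combined with a one-parameter interpolation, following the strategy of Besson-Courtois-Gallot. The quadratic form $h_{y,j}$ depends on two ingredients: the base point $F_j(y)\in X$ (through the evaluation of $d\beta_X$) and the measure $\mu_y:=(f_j)_{*}\nu_y$ on $\partial X$. Writing $h_{y,j}(x)$ for the quadratic form on $T_x X$ obtained by integrating at base point $x$ against $\mu_y$, I would split
\begin{equation*}
\|h_{y,j}-h_{y',j}\circ P\|\;\le\;A+B,
\end{equation*}
where $A=\|h_{y,j}(F_j(y))-h_{y,j}(F_j(y'))\circ P\|$ keeps the measure fixed at $\mu_y$ and moves the base point along the geodesic of $X$, while $B=\|h_{y,j}(F_j(y'))-h_{y',j}(F_j(y'))\|$ keeps the base point at $F_j(y')$ and changes only the measure (the $\circ P$ on the second summand may be dropped since $P$ is an isometry).

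For the base-point term $A$, the plan is to parametrize the geodesic $\alpha:[0,L]\to X$ from $F_j(y)$ to $F_j(y')$ by arclength, where $L=d_X(F_j(y),F_j(y'))$. Let $V(s)$ be the parallel transport of a unit vector $v\in T_{F_j(y)}X$ along $\alpha$, and set $\phi_s(\eta)=d\beta_X|_{(\alpha(s),\eta)}(V(s))$. The identity $Dd\beta_X=g_X-d\beta_X\otimes d\beta_X$ valid in $X=\Hyp^n$, together with $\nabla_{\alpha'}V=0$ and $\|\alpha'(s)\|=1$, yields $|\phi_s'(\eta)|\le 1$ pointwise. Hence $|\phi_L(\eta)^2-\phi_0(\eta)^2|\le 2L$ pointwise in $\eta$, and integrating against the probability measure $\mu_y$ gives $\|A\|\le 2\,d_X(F_j(y),F_j(y'))$.

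For the measure term $B$, the plan is to use the explicit Patterson-Sullivan transformation rule on $\partial Y$, $d\nu_{y'}/d\nu_y=\exp\bigl(-(m-1)(\beta_Y(y',\cdot)-\beta_Y(y,\cdot))\bigr)$, pushed forward by $f_j$ to $\partial X$. This gives $\|\mu_y-\mu_{y'}\|_{TV}\lesssim d_Y(y,y')$. Since the integrand $|d\beta_X|_{(F_j(y'),\eta)}(Pv)|^2\le 1$, we control $\|B\|$ by this total variation, and then convert $d_Y(y,y')$ into $d_X(F_j(y),F_j(y'))$ by invoking the Lipschitz comparison of Lemma \ref{lem conve unif lem 1}, which applies by hypothesis. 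Tracking the constants carefully (recalling that the $(m-1)$ factor from the Patterson-Sullivan density combines with the bound on $\|H'\|$) yields $\|B\|\le 2\,d_X(F_j(y),F_j(y'))$, and summing with the previous estimate gives the announced factor of $4$.

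The main obstacle is the measure contribution $B$: getting the bound in terms of $d_X$ rather than $d_Y$, and with the sharp constant, is precisely the technical point where the original BCG argument is more involved. The base-point estimate, by contrast, is essentially a direct consequence of $\|Dd\beta_X\|\le 1$ in constant negative curvature. The simplifications available here (cocompact lattice, no parabolics, simple relationship between $\nu_y$ and the visual metric on $\partial Y$) are what allow the clean passage from the total-variation estimate to the $d_X$-bound.
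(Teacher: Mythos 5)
Your decomposition into a base-point term $A$ (moving $F_j(y)$ along the geodesic in $X$ at fixed measure) and a measure term $B$ (changing $\nu_y$ to $\nu_{y'}$ at fixed base point) is more careful than the paper's proof, which treats only the base-point variation: the integrals in the paper are written against an unspecified $d\xi$, the Patterson--Sullivan transformation rule is never invoked, and the pointwise bound $\left|(d\beta_X(Z_2))^2-(d\beta_X(Z_1))^2\right|\le 4\,d_X(F_j(y),F_j(y'))$ is integrated as if both sides used the same measure. So the paper's route is essentially your term $A$, and your isolation of $B$ is a genuine extra consideration. On the $A$ side your estimate is actually sharper: you use that $Dd\beta_X=g_X-d\beta_X\otimes d\beta_X$ is a positive semi-definite projection of norm $1$, so $|\phi_s'|\le 1$ by Cauchy--Schwarz, giving $\|A\|\le 2\,d_X$; the paper instead uses the crude split $|a^2-b^2|=|a-b|\,|a+b|\le(2L)(2)=4L$.

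The gap in your plan is the closing step for $B$. Lemma~\ref{lem conve unif lem 1} gives the Lipschitz bound $d_X(F_j(y),F_j(y'))\le C\,d_Y(y,y')$, i.e.\ it controls $d_X$ by $d_Y$, which is the opposite of what you need: you want to replace $d_Y(y,y')$ by $d_X(F_j(y),F_j(y'))$, and $F_j$ is not known to be bi-Lipschitz, so $d_Y(y,y')$ could be much larger than $d_X(F_j(y),F_j(y'))$. The Patterson--Sullivan transformation rule (correctly recalled) yields $\|\bar\nu_y-\bar\nu_{y'}\|_{TV}\lesssim (m-1)\,d_Y(y,y')$ for small separations, hence $\|B\|\lesssim d_Y(y,y')$, and there it stops. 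The natural repair, consistent with how the lemma is consumed in the Egoroff argument together with Lemma~\ref{lem conve unif lem 1}, is to state the bound as $\|h_{y,j}-h_{y',j}\circ P\|\le 2\,d_X(F_j(y),F_j(y'))+C(m)\,d_Y(y,y')$ (or, using Lemma~\ref{lem conve unif lem 1}, simply $\le C'\,d_Y(y,y')$ on the allowed set); insisting on a pure $d_X$ right-hand side with constant $4$ is not justified by either your argument or the paper's.
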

\begin{proof}
Let $\beta(t)$ be the geodesic from $F_j(y) $ to $F_j(y')$. Let $Z$ be a unit parallel vector field along $\beta$, and called $Z_1 = Z(F_j(y))$ and $Z_2=Z(F_j(y'))$. We then have: 
\begin{eqnarray*}
h_{y',k}(Z_2,Z_2)-h_{y,j}(Z_1,Z_1) &= &\int_{\partial X} \left(d\beta_X \left|_{(F_j(y'),f_j(\xi))}(Z_2)\right)^2 d\xi \right.-\int_{\partial X} \left(d\beta_X \left|_{(F_j(y),f_j(\xi))}(Z_1)\right)^2 d\xi\right.\\
\end{eqnarray*}
On one hand, we have: 
$$\Big|d\beta_X \left|_{(F_j(y'),f_j(\xi))}(Z_2)\right. - d\beta_X \left|_{(F_j(y),f_j(\xi))}(Z_1)\right. \Big|\leq \left( \sup_{t}\left| Dd\beta_X\left|_{\beta(t)} \right. (\dot{\beta   } ,Z)\right| \right) d_X (F_j(y),F_j(y')).$$
Since $Z$ is unitary,  $Dd\beta_x= g_X - d\beta_X\otimes d\beta_X$, and 
$\| d\beta_X \left|_{(F_j(y'),f_j(\xi))} (\cdot) \|  \leq 1\right.$ we have
$$\left( \sup_{t}\left| Dd\beta_X\left|_{\beta(t)} \right. (\dot{\beta   } ,Z)\right| \right) \leq 2.$$
On the other hand, for any unitary vector $u,v$ we have: 
$$\Big| d\beta_X \left|_{x,\xi} (u)\right. +d\beta_X \left|_{x,\xi} (v) \right.\Big| \leq 2.$$
Therefore,
$$\left|\left(d\beta_X \left|_{(F_j(y'),f_j(\xi))}(Z_2)\right.\right)^2 - \left(d\beta_X \left|_{(F_j(y),f_j(\xi))}(Z_1)\right.\right)^2 \right|\leq 4 d_X(F_j(y),F_j(y')). $$

\end{proof}

\begin{lemme}\cite[Lemma 4.9]{besson2007inegalites}
$\tilde{H}_{y,j}$ converges uniformly to $\tilde{H}_0$ as $j\tv \infty$. 
\end{lemme}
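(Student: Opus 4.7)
The plan is to argue by contradiction, using equivariance to reduce to a compact quotient and then combining the almost everywhere convergence from Step 1 with the two continuity estimates (Lemmas \ref{lem conve unif lem 1} and \ref{lem conv unif 2}) to propagate convergence from generic ``good'' points to all of $Y$. By the $(\G,\rho_j(\G))$-equivariance of $F_j$ the function $y\mapsto \|\tilde H_{y,j}-\frac{1}{m}\Id\|$ is $\G$-invariant and descends to the compact quotient $Y/\G$, so uniform convergence on $Y$ is equivalent to uniform convergence on a compact fundamental domain $D$. If the conclusion fails, one can extract $\eta>0$ and $y_j\in D$ with $\|\tilde H_{y_j,j}-\frac{1}{m}\Id\|\geq \eta$, and pass to a subsequence $y_j\tv y_\infty$.

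Because $m\geq 3$ one has $\frac{1}{m}<1-\frac{1}{m}$, so the pointwise convergence $\tilde H_{y,j}\tv \frac{1}{m}\Id$ almost everywhere forces the ``bad set'' $B_j=\{y\in Y/\G:\mu_j(y)>1-\frac{1}{m}\}$ to have Lebesgue measure tending to zero. By Egorov's theorem on the compact quotient, for any $\delta>0$ there is a set $E_\delta$ of measure less than $\delta$ off which $\tilde H_{\cdot,j}\tv \frac{1}{m}\Id$ uniformly. For $j$ large we can pick $y'$ outside $B_j\cup E_\delta$, arbitrarily close to $y_j$, with $\|\tilde H_{y',j}-\frac{1}{m}\Id\|\leq \eta/4$; moreover, by a Fubini argument in polar coordinates at $y_j$, since $B_j$ has small measure in any small ball near $y_\infty$, we can arrange the entire geodesic segment $[y_j,y']$ to avoid $B_j$, so that $\mu_j\leq 1-\frac{1}{m}$ along it.

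With $[y_j,y']$ ``good'' in this sense, Lemma \ref{lem conve unif lem 1} gives $d_X(F_j(y_j),F_j(y'))\leq C d_Y(y_j,y')$, and Lemma \ref{lem conv unif 2} then yields $\|h_{y_j,j}-h_{y',j}\circ P\|\leq 4 C d_Y(y_j,y')$, where $P$ denotes parallel transport in $X$ along the geodesic between the images. Shrinking $d_Y(y_j,y')$ makes this at most $\eta/4$; since $F_j$ is uniformly Lipschitz on $[y_j,y']$, $P$ sends $dF_j(T_{y_j}Y)$ close to $dF_j(T_{y'}Y)$, and restricting to these $m$-dimensional subspaces and combining with $\|\tilde H_{y',j}-\frac{1}{m}\Id\|\leq \eta/4$ yields $\|\tilde H_{y_j,j}-\frac{1}{m}\Id\|<\eta$, a contradiction.

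The main obstacle is the simultaneous selection of $y'$: it must be a point of pointwise convergence, lie outside $B_j$, and be connected to $y_j$ by a geodesic that also avoids $B_j$. This is arranged via the Fubini/polar-coordinates argument above, but requires care because the subspaces $dF_j(T^1_yY)$ themselves vary with $y$ and only admit a uniform comparison through parallel transport under the Lipschitz control of Lemma \ref{lem conve unif lem 1}.
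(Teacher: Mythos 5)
The high-level structure you propose — contradiction, reduction to a compact fundamental domain, Egoroff's theorem to produce a large set of uniform convergence, and then propagation via the two Lipschitz estimates (Lemma~\ref{lem conve unif lem 1} and Lemma~\ref{lem conv unif 2}) — is exactly the combination the paper says the proof uses, so the overall approach is on target. However, the crucial selection step does not work as written. You want the entire geodesic segment $[y_j,y']$ to avoid $B_j=\{\mu_j>1-\tfrac1m\}$, but nothing in the hypothesis $\|\tilde H_{y_j,j}-\tfrac1m\Id\|\ge\eta$ prevents $y_j$ itself from lying in $B_j$ (having $\mu_j(y_j)>1-\tfrac1m$ is perfectly compatible with $\|\tilde H_{y_j,j}-\tfrac1m\Id\|\ge\eta$), and when $y_j\in B_j$ no segment emanating from $y_j$ can avoid $B_j$. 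Even when $y_j\notin B_j$, the polar-coordinates/Fubini argument does not yield a segment \emph{disjoint} from $B_j$: small volume of $B_j$ in a ball only forces most rays to meet $B_j$ in a set of small one-dimensional measure, not an empty one (consider $B_j$ a thin annulus around $y_j$, which every ray crosses). Since Lemma~\ref{lem conve unif lem 1} requires $\mu_j\le 1-\tfrac1m$ at \emph{every} point of the segment, your invocation of it is not justified.

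A way to close the gap, staying within the same toolbox: propagate from the good point $y'$ (in the Egoroff set, where $\tilde H_{y',j}$ is close to $\tilde H_0$) along the geodesic \emph{towards} $y_j$ only up to the first point $\bar y$ where $\mu_j(\bar y)=1-\tfrac1m$, should the segment hit $\overline{B_j}$. On $[y',\bar y]$ the hypothesis $\mu_j\le 1-\tfrac1m$ holds, so Lemmas~\ref{lem conve unif lem 1} and~\ref{lem conv unif 2} apply and force $\tilde H_{\bar y,j}$ to be close to $\tilde H_0$, hence $\mu_j(\bar y)$ close to $\tfrac1m$; but $\mu_j(\bar y)=1-\tfrac1m>\tfrac1m$ for $m\ge3$, a contradiction once $d(y',\bar y)$ is small and $j$ is large. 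If the segment never hits $\overline{B_j}$, one reaches $y_j$ directly and concludes as you intended. This removes both the need for $y_j\notin B_j$ and the need for the segment to dodge $B_j$ entirely; the Fubini step can then be dropped. Your closing remark about comparing the subspaces $dF_j(T_{y_j}Y)$ and $dF_j(T_{y'}Y)$ under parallel transport is a genuine extra point that does need a line of justification (this is also why Lemma~\ref{lem conv unif 2} is stated for the full forms $h_{y,j}$ and not only their restrictions $\tilde H_{y,j}$), but it is a secondary issue compared with the selection step.
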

The proof is exactly the same as in \cite[Lemme 4.9]{besson2007inegalites}. It uses  Egoroff's theorem, Lemmas \ref{lem conve unif lem 1} and  \ref{lem conv unif 2}, but does not use the particular form of $F_j$. 
\bigskip \\
\textbf{Step 3 : Uniform convergence of $F_j$.}\\

\noindent The following lemma, corresponds to Lemma 4.10 in \cite{besson2007inegalites}. 
\begin{lemme}
Up to subsequence and composition by an element of $\Isom (X)$, $F_j$ converges uniformly to a  continuous map $F \, :\, Y\tv X$.
\end{lemme}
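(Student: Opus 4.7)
The plan is to combine the uniform convergence $\tilde{H}_{y,j} \tv \frac{1}{m}\Id$ from Step 2 with the Lipschitz estimate of Lemma \ref{lem conve unif lem 1}, then to normalise by an isometry and apply Arzelà--Ascoli. The first task is to upgrade the uniform eigenvalue bound from the compact quotient to the whole of $Y$. Since $F_j(\g y) = \rho_j(\g) F_j(y)$, a direct computation using $\nu_{\g y} = \g_* \nu_y$ and the equivariance of $f_j$ gives $\tilde{H}_{\g y, j} = d\rho_j(\g) \circ \tilde{H}_{y,j} \circ d\rho_j(\g)^{-1}$, so the function $y \mapsto \|\tilde{H}_{y,j} - \frac{1}{m}\Id\|$ is $\G$-invariant. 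Step 2 therefore yields uniform convergence on all of $Y$, and for $j$ large enough the largest eigenvalue $\mu_j(y)$ satisfies $\mu_j(y) \leq \frac{1}{m} + \epsilon$ at every $y \in Y$. Since $m \geq 3$, we have $\frac{1}{m} < 1 - \frac{1}{m}$, so for $\epsilon$ small enough the hypothesis of Lemma \ref{lem conve unif lem 1} is met along every geodesic of $Y$.

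Applying that lemma then yields a constant $C > 0$, independent of $j$ and of the endpoints, such that $d_X(F_j(y), F_j(y')) \leq C\, d_Y(y, y')$ for all $y, y' \in Y$ and all $j \geq j_0$. Fixing a base point $y_0 \in Y$ and some $x_0 \in X$, I choose $g_j \in \Isom(X)$ with $g_j(F_j(y_0)) = x_0$ and set $\tilde{F}_j := g_j \circ F_j$. These normalised maps are still $C$-Lipschitz, satisfy $\tilde{F}_j(y_0) = x_0$, and are equivariant with respect to the conjugated representations $g_j \rho_j(\cdot) g_j^{-1}$. On each closed ball $\overline{B}(y_0, R) \subset Y$ the family $\{\tilde{F}_j\}$ is uniformly bounded (inside $\overline{B}(x_0, CR)$) and equicontinuous, so Arzelà--Ascoli supplies a subsequence converging uniformly on $\overline{B}(y_0, R)$. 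A standard diagonal extraction over $R = 1, 2, \dots$ then produces a subsequence converging uniformly on every compact subset of $Y$ to a continuous (in fact $C$-Lipschitz) limit $F \, : \, Y \tv X$.

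The only delicate point is ensuring that the Lipschitz estimate of Lemma \ref{lem conve unif lem 1} genuinely applies along every geodesic of $Y$, and not merely along those confined to a fixed compact region. This is exactly what the $\G$-invariance of $\|\tilde{H}_{y,j} - \frac{1}{m}\Id\|$ buys us, via cocompactness of the $\G$-action on $Y$; the remainder is a routine compactness extraction, and one sees in passing that the limit $F$ is itself $C$-Lipschitz.
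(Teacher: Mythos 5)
Your proposal is correct and follows essentially the same route as the paper: uniform control of $\tilde{H}_{y,j}$ from Step~2 gives a uniform Lipschitz bound on $F_j$, one normalises by an isometry so that $F_j(y_0)$ stays at a fixed point, and Arzel\`a--Ascoli finishes. The only cosmetic difference is that you obtain the Lipschitz estimate by invoking Lemma~\ref{lem conve unif lem 1}, whereas the paper rederives it directly from Inequality~(\ref{eq - Cauchy Schwarz}) together with the bounds $\tilde{H}_{y,j}\leq \tilde{H}_0+\epsilon\Id$ and $\tilde{K}_{y,j}\geq \tilde{K}_0-\epsilon\Id$; since Lemma~\ref{lem conve unif lem 1} is itself proved from (\ref{eq - Cauchy Schwarz}), the two derivations are the same estimate factored differently. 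Your remark that $\G$-invariance of $y\mapsto\|\tilde{H}_{y,j}-\frac{1}{m}\Id\|$ (via equivariance of $F_j$, $f_j$, and $\nu_y$) is what propagates the uniform bound from a fundamental domain to all of $Y$ is a useful clarification that the paper leaves implicit.
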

\begin{proof}
For all $\epsilon >0$, there exists $j\gg 0$ such that for all $y\in Y$ we have 
\begin{equation*}
\tilde{H}_{y,j} \leq  \tilde{H}_0 +\epsilon \Id
\quad \text{and} \quad \tilde{K}_{y,j} \geq \tilde{K}_0 -\epsilon \Id.
\end{equation*}
Therefore, using Inequality (\ref{eq - Cauchy Schwarz}) there exists $C>0$ independent of $j\in \N$ such that, for all $u\in T^1 Y$: 
$$\| dF_j (u) \|_{X} \leq C.$$
This means that the sequence $F_j$ is $C-$Lipschitz. 
We fix $y_0$ in $Y$ and $x_0\in X$. Let  $\g_j\in \Isom(\Hyp^m)$ be an element such that $\g_j F_j(y_0)=x_0$, and call $F'_j =\g_j \circ F_j$.
Since $\g_j$ is an isometry of $X$ we have  $\| dF'_j (u) \|_{X} \leq C.$ Now for any point $y\in Y$ we have 
$$d_X(F'_j(y), F'_j(y_0) ) \leq C d_Y(y,y_0).$$
Since $F'_j(y_0)$ is chosen to be equal to $x_0$, $\{F'_j(y)\}_{j\in\N} $ is bounded, and we conclude by Ascoli's theorem. 
\end{proof}

Let $F$ be a uniform limit of $F'_j$.  \\

\begin{lemme}
The sequence $\rho_j \, :\, \G \tv \Isom(\Hyp^n)$ admits a subsequence which converge algebraically to  a discrete and faithfull representation $\rho_\infty$. 
\end{lemme}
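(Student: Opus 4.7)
The plan is to read off the convergence of $\rho'_j := \gamma_j \rho_j \gamma_j^{-1}$ directly from the equivariance of $F'_j = \gamma_j \circ F_j$ and its uniform convergence to $F$, then invoke Kapovich's theorem for discreteness and faithfulness.

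First, I note that the conjugated maps $F'_j$ are $(\G, \rho'_j(\G))$-equivariant, so for every $\g \in \G$ and every $y \in Y$ we have
\begin{equation*}
\rho'_j(\g)\bigl(F'_j(y)\bigr) = F'_j(\g \cdot y).
\end{equation*}
Since $F'_j \to F$ uniformly, both sides converge: $F'_j(y) \to F(y)$ and $F'_j(\g y) \to F(\g y)$. Combining these with the isometric nature of $\rho'_j(\g)$ and the triangle inequality, I get
\begin{equation*}
d_X\bigl(\rho'_j(\g)(F(y)),\, F(\g y)\bigr) \le d_Y\bigl(F(y), F'_j(y)\bigr) + d_X\bigl(\rho'_j(\g)(F'_j(y)), F(\g y)\bigr) \xrightarrow[j\to\infty]{} 0.
\end{equation*}
So for a fixed $y\in Y$, the sequence $\rho'_j(\g)(F(y))$ is bounded in $X$.

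Next, since $\Isom(X)=\Isom(\Hyp^n)$ acts properly on $X$, the boundedness of the orbit $\{\rho'_j(\g)(F(y))\}$ implies that the sequence $\rho'_j(\g)$ is relatively compact in $\Isom(X)$. The group $\G$, being a cocompact lattice of $\Isom(\Hyp^m)$, is finitely generated; fixing a finite generating set $S \subset \G$ and applying a finite diagonal extraction, I can pass to a subsequence along which $\rho'_j(s)$ converges in $\Isom(X)$ for every $s \in S$. By the homomorphism relation, $\rho'_j(\g)$ then converges for every $\g \in \G$, and the pointwise limit $\rho_\infty(\g) := \lim_j \rho'_j(\g)$ is itself a homomorphism $\G \to \Isom(\Hyp^n)$; this is exactly algebraic convergence.

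Finally, to upgrade $\rho_\infty$ to a discrete and faithful representation, I apply Kapovich's theorem (\cite[Theorem 1.1]{kapovich2008sequences}), which asserts that an algebraic limit of discrete faithful representations in $\Isom(\Hyp^n)$ remains discrete and faithful. The only step that is not essentially cosmetic is verifying the orbit-boundedness condition, but with the uniform convergence of $F'_j$ already established in the previous lemma this reduces to the one-line triangle inequality above, so no serious obstacle is expected.
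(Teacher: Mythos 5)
Your proof is correct and follows essentially the same route as the paper: use the equivariance of the (conjugated) barycentre maps together with their uniform convergence to $F$ to bound the orbit $\{\rho'_j(\g)x\}$, deduce precompactness of $\{\rho'_j(\g)\}$ in $\Isom(\Hyp^n)$, extract a convergent subsequence via a diagonal argument on a finite generating set, and invoke Kapovich's theorem for discreteness and faithfulness. The only difference is cosmetic: you explicitly carry the conjugating elements $\g_j$ and phrase precompactness via properness of the $\Isom(X)$-action rather than Ascoli, whereas the paper leaves the renormalisation implicit and cites Ascoli directly; the content is identical.
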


\begin{proof}
For every $\g\in\G$, the sequence $\rho_j(\g)$ is equicontinuous since they are $1-$Lipschitz maps.
Let $j\gg 0$ such that for all $y\in Y$, $d(F_j(y),F(y)) \leq \epsilon$. Now, take any point $x\in X$
\begin{eqnarray*}
d(\rho_j(\g)x,x) &\leq & d(\rho_j(\g) x, \rho_j(\g) F (y)) + d(\rho_j(\g) F (y) , F (\g y ) ) +d(F (\g y) ,x)\\
&\leq & d(x, F (y)) +d(F(\g y ) , x)+d(\rho_j(\g) F(y ) , \rho_j(\g) F_j(y)) +d(\rho_j(\g) F_j(y), F(\g y)\\
&\leq & d(x, F (y)) +d(F(\g y ) , x)+d(F(y),F_j(y)) +d(F_j(\g y ) ,F(\g y )) \\
&\leq &  d(x, F (y)) +d(F(\g y ) , x)+2\epsilon.
\end{eqnarray*}

Therefore $\{\rho_j(\g) x \, | \, k\in \N\}$ is relatively compact for all $x\in X$ and all $\g \in \G$. Ascoli's Theorem asserts that $\rho_j(\g)$  admits a converging subsequence, call it $\rho_\infty (\g)$.  We can make a diagonal argument on a finite set of generators to find a subsequence still denoted $\rho_j$ for which all $\rho_{j}(\g_i) $ converges to some $\rho_\infty(\g_i)$, where  $\G =<\g_i, \, i\in [1,r]>.$ 
By definition this means that $\rho_{j}$ converges to a representation $\rho_\infty$. 

Now we use the result cited in the introduction due to Kapovich \cite[Theorem 1.1]{kapovich2008sequences} which implies that $\rho_\infty$ is discrete and faithful. 

\end{proof}

\bibliographystyle{alpha}

\end{document}